\definecolor{deepgreen}{cmyk}{1,0,1,0.5}
\newcommand{\F}{\mathcal{F}}
\newcommand{\cS}{\mathcal{S}}
\newcommand{\R}{\mathbb{R}}
\newcommand{\Z}{\mathbb{Z}}
\newcommand{\al}{\alpha}
\newcommand{\be}{\beta}
\newcommand{\ga}{\gamma}
\newcommand{\de}{\delta}
\newcommand{\e}{\varepsilon}
\newcommand{\fy}{\varphi}
\newcommand{\la}{\lambda}
\newcommand{\te}{\theta}
\newcommand{\s}{\sigma}
\newcommand{\ta}{\tau}
\newcommand{\ka}{\kappa}
\newcommand{\x}{\xi}
\newcommand{\y}{\eta}
\newcommand{\De}{\Delta}
\newcommand{\La}{\Lambda}
\newcommand{\p}{\partial}
\newcommand{\na}{\nabla}
\newcommand{\re}{\mathop{\mathrm{Re}}}
\newcommand{\supp}{\operatorname{supp}}
\newcommand{\lec}{\lesssim}
\newcommand{\I}{\infty}
\newcommand{\ti}{\widetilde}
\newcommand{\ha}{\widehat}
\newcommand{\U}{\underline}
\newcommand{\vect}[1]{\accentset{\rightarrow}{#1}}
\newcommand{\vecn}[1]{\accentset{\rightharpoonup}{#1}}
\newcommand{\veci}[1]{\accentset{\rightharpoondown}{#1}}
\newcommand{\vecd}[1]{\accentset{\dashrightarrow}{#1}}
\newcommand{\LR}[1]{{\langle #1 \rangle}}
\newcommand{\EQ}[1]{\begin{equation}\begin{split} #1 \end{split}\end{equation}}
\newcommand{\reg}{\operatorname{reg}}
\newcommand{\str}{\operatorname{str}}
\newcommand{\Del}[1]{}
\newcommand{\CAS}[1]{\begin{cases} #1 \end{cases}}
\newcommand{\pt}{&}
\newcommand{\pr}{\\ &}
\newcommand{\pq}{\quad}
\newcommand{\pn}{}
\newcommand{\prq}{\\ &\quad}
\newcommand{\prQ}{\\ &\qquad}
\newcommand{\Cstar}{C^\star_{\operatorname{TM}}}
\numberwithin{equation}{section}
\newtheorem{thm}{Theorem}[section]
\newtheorem{lem}[thm]{Lemma}
\theoremstyle{remark}
\begin{document}

\title[Errata]{Errata: Scattering threshold for the focusing nonlinear Klein-Gordon equation}

\author{S.~Ibrahim}
\address{Department of Mathematics and Statistics \\ University of Victoria \\
 PO Box 3060 STN CSC \\ Victoria, BC, V8P 5C3\\ Canada}
\email{ibrahim@math.uvic.ca}
\urladdr{http://www.math.uvic.ca/~ibrahim/}

\author{N.~Masmoudi}
\address{The Courant Institute for Mathematical Sciences \\ New York University} 
\email{masmoudi@courant.nyu.edu} 
\urladdr{http://www.math.nyu.edu/faculty/masmoudi} 

\author{K.~Nakanishi}
\address{Department of Pure and Applied Mathematics \\
Graduate School of Information Science and Technology \\
Osaka University}
\email{nakanishi@ist.osaka-u.ac.jp}

\begin{abstract}
This article resolves some errors in the paper ``Scattering threshold for the focusing nonlinear Klein-Gordon equation", Analysis \& PDE {\bf 4} (2011) no.~3, 405--460. The errors are in the energy-critical cases in two and higher dimensions. 
\end{abstract}

\maketitle

\newcommand{\pp}{\U}

\section{The errors and the missing ingredient}
This article resolves some errors in \cite{SB}. 
One correction affects also \cite{thresol,TM}. 
The section and equation numbers etc.~in \cite{SB} will be underlined for distinction.  
The major errors are the following three: one in \pp{Section 2} for the existence of mass-shifted ground state in the two dimensional energy-critical case, and two in \pp{Section 5} for the nonlinear profile decomposition in the higher dimensional energy-critical case. 
\begin{enumerate}
\item In the proof of \pp{Lemma 2.6}, it is not precluded that the weak limit $Q$ in \pp{(2-67)} is zero. Hence the existence of $Q$ in the case $c\le 1$ is not proved. 
\item In \pp{(5-56)}, we do not have $\|\vect V_n(\ta_n)-\vect V_\I(\ta_n)\|_{L^2_x}\to 0$ when $h_\I=0$, $\ta_\I=\pm\I$ and $\liminf_{n\to\I}|\ta_nh_n^2|>0$. Indeed, assuming that $\ta_nh_n^2\to m\in[-\I,\I]$ after extraction of a subsequence, we have 
\EQ{
 \|\vect V_n(\ta_n)-\vect V_\I(\ta_n)\|_{L^2_x}
 \to \CAS{\|(e^{im/(2|\na|)}-1)\psi\|_{L^2_x} &(|m|<\I), \\
 \sqrt 2\|\psi\|_{L^2_x} &(m=\pm\I).}}
\item In the proof of \pp{Lemma 5.6}, the global bound \pp{(5-96)} does not follow from the uniform bound on finite time intervals, since the required largeness of $n$ depends on the size of the interval $I$. 
\end{enumerate}
(1) is concerned only with a very critical case of exponential nonlinearity in two dimensions $d=2$. More precisely, it is problematic only if 
\EQ{ \label{2dprob}
 0< \limsup_{|u|\to\I}e^{-\ka_0|u|^2}|u|^2f(u)<\I,}
where $\ka_0$ is the exponent in \pp{(1-29)}. (2)--(3) are crucial only in the $H^1$ critical case of higher dimensions $d\ge 3$, with $h_\I=0$: the concentration by scaling in the nonlinear profile, where we need to modify the definition of the nonlinear concentrating waves, and then solve the massless limit problem for NLKG (see Theorem \ref{unif scat} below). 
In the other case, i.e.~with the subcritical or exponential nonlinearity or with $h_\I=1$, we still need to take care of (3), but it is rather superficial change. 

\section{Correction for (1)}
We do not know if \pp{Lemma 2.6} holds true in the very critical case \eqref{2dprob}. So we add the following assumption 
\EQ{ \label{avoid crit}
 \limsup_{|u|\to\I}e^{-\ka_0|u|^2}|u|^2f(u) \in \{0,\I\}}
in \pp{Proposition 1.2(3)} and in \pp{Lemma 2.6}. 
The existence of $Q$ was used in \cite{SB} only to characterize the threshold energy $m$, so the rest of the paper is not affected by it. 

In \cite[(1.24)]{thresol}, the existence of $Q$ is mentioned to characterize the threshold $m^{(c)}$. 
It should be also restricted by \eqref{avoid crit}, but the rest of the paper \cite{thresol} does not really need $Q$. Removing $Q$, \cite[(2.3)]{thresol} should be replaced with 
\EQ{
 m \le H^{(c)}_p(\fy),}
\cite[(2.6)]{thresol} should be replaced with
\EQ{
 m \le J^{(c)}(\la\fy) = H^{(c)}_p(\la\fy) \le H^{(c)}_p(\fy),}
and \cite[(2.7)]{thresol} with 
\EQ{
 \ddot y \pt= (2+p)\|\dot u\|_{L^2}^2 + 2p(H^{(1)}_p(u)-m)
 \pr= (4+\e)\|\dot u\|_{L^2}^2 + (1-c)\e\|u\|_{L^2}^2 + 2p(H^{(c)}_p(u)-m)
 \pr\ge (1+\e/4)\dot y^2/y + (1-c)\e y.}

The existence of $Q$ is also mentioned in \cite[Theorem 5.1]{TM}. 
It should be also restricted by \eqref{avoid crit}. 
The rest of the paper \cite{TM} remains unaffected. 

\medskip

We still need to prove \pp{Lemma 2.6} under the new restriction \eqref{avoid crit}. 
If the limit \eqref{avoid crit} is infinite, then \cite[Theorem 1.5(B)]{TM} implies $\Cstar(F)=\I>1$. 
In this case, the proof of \pp{Lemma 2.6} remains valid. 
If the limit \eqref{avoid crit} is zero, then \cite[Theorem 1.5(B)]{TM} implies $\Cstar(F)<\I$. In this case, we do not argue as in \cite{SB}, but rely on the compactness \cite[Theorem 1.5(C)]{TM}. 
Let $\fy_n\in H^1(\R^2)$ be a normalized maximizing sequence for $\Cstar(F)$, i.e. 
\EQ{
 \|\fy_n\|_{L^2}=1, \pq \ka_0\|\na\fy_n\|_{L^2}^2\le 4\pi, \pq 2F(\fy_n)\to C:=\Cstar(F)\in(0,\I).}
By the standard rearrangement, and the $H^1$ boundedness, we may assume that $\fy_n$ are radially decreasing and $\fy_n\to\exists\fy$ weakly in $H^1(\R^2)$. 
By \cite[Theorem 1.5(C)]{TM}, we have $2F(\fy_n)\to 2F(\fy)=C>0$. In particular, $\fy\not=0$. 
Since $\ka_0\|\na\fy\|_{L^2}^2\le 4\pi$ and $\|\fy\|_{L^2}\le 1$ by the weak convergence, we deduce from the definition of $\Cstar(F)$ that $\|\fy\|_{L^2}=1$ and $\fy$ is a maximizer. 
Hence for a Lagrange multiplier $\mu\ge 0$, 
\EQ{
 f'(\fy)-C\fy = -\mu\De\fy.}
$\mu\not=0$ is obvious by the decay order of $f'$ as $\fy\to 0$. 
Hence $\mu>0$ and so $\ka_0\|\na\fy\|_{L^2}^2=4\pi$, since otherwise we could increase both $F(\fy)$ and $\|\na\fy\|_{L^2}^2$ by the $L^2$ scaling $\fy_{1,-1}^\la$ with $\la>0$, using the $L^2$ super-critical condition \pp{(1-21)}.  
Then $Q(x):=\fy(\mu^{-1/2}x) \in H^2(\R^2)$ satisfies 
\EQ{
 -\De Q + C Q = f'(Q), \pq \ka_0\|\na Q\|_{L^2}^2= 4\pi, 
 \pq 2F(Q)=C\|Q\|_{L^2}^2,}
Hence $J^{(C)}(Q)=\frac 12\|\na Q\|_{L^2}^2 = 2\pi/\ka_0$. 
The rest of the proof of \pp{Lemma 2.6}, namely the proof of $m_{\al,\be}=m_{0,1}=2\pi/\ka_0$ remains valid. 

\section{Correction for (2)-(3)} 
For (2)-(3), we do not have to modify the main results, but need to correct the proof, including the definition of the nonlinear profile decomposition. 
Henceforth, we always assume that $0<h_n\to h_\I$, $(t_n,x_n)\in\R^{1+d}$, and $\ta_n=-t_n/h_n\to\ta_\I\in[-\I,\I]$ are sequences. 
The main problematic case is when the energy concentrates, namely $h_\I=0$, which can happen only in the energy critical case \pp{(1-28)}:
\EQ{
 d \ge 3, \pq f(u)=|u|^{2^\star}/2^\star, \pq 2^\star=2d/(d-2).}

First we modify the vector notation in \pp{(4-1)}. For any real-valued function $a(t,x)$, the complex-valued functions $\vect a,\vecn a,\veci a$ are defined by 
\EQ{ \label{convention}
 \vect a:=(\LR{\na}-i\p_t)a, \pq  \vecn a := (\LR{\na}_n-i\p_t)a, \pq \veci a:=(\LR{\na}_\I-i\p_t)a,} 
where $\LR{\na}_*=\sqrt{h_*^2-\De}$ as in \pp{(5-1)}. Hence $a$ is recovered from either of them by 
\EQ{
 a = \re\LR{\na}^{-1}\vect a= \re\LR{\na}_n^{-1}\ \vecn a= \re\LR{\na}_\I^{-1}\ \veci a.}
Note that $(\veci a, a)$ was denoted by $(\vect a,\ha a)$ in \cite{SB}, but it was confusing. 
Indeed, $u_{(n)}$ in \pp{(5-55)} did not make sense if $h_\I=0$, since $\vect u_{(n)}$ in \pp{(5-54)} was not in the form \pp{(4-1)}. 
So we replace \pp{(5-54)} with 
\EQ{ \label{new conc}
 \vect u_{(n)}=T_n\vecn U_{(n)}((t-t_n)/h_n),}
where $\vecn U_{(n)}$ is defined by 
\EQ{ \label{new prof}
 \vecn V_n:=e^{it\LR{\na}_n}\psi, \pq \vecn U_{(n)}=\vecn V_n - i \int_{\ta_\I}^t e^{i(t-s)\LR{\na}_n}f'(U_{(n)})ds.}
Then $u_{(n)}=h_nT_n U_{(n)}((t-t_n)/h_n)$ is a solution of NLKG satisfying 
\EQ{
 \lim_{t\to\ta_\I}\|(\vect u_{(n)}-\vect v_n)(t h_n+t_n)\|_{L^2_x}= 0.}
In other words, we keep NLKG in defining the profiles, even if $h_\I=0$. 
Note that if $h_\I=1$ then $\vecn U_{(n)}=\vect U_\I$ and so $u_{(n)}$ is unchanged. 


By the change of \pp{(5-54)} to \eqref{new conc}, the problematic \pp{(5-56)} is replaced with 
\EQ{ \label{new 5-56}
 \|\vect u_n(0)-\vect u_{(n)}(0)\|_{L^2_x}
 \pn= \|\int_{\ta_\I h_n+t_n}^{0\ (=\ta_n h_n+t_n)}e^{-is\LR{\na}}f'(u_{(n)})ds\|_{L^2_x} \to 0.}
In order to prove the last limit, as well as the global Strichartz approximation for (3), we need the convergence in the massless limit of the $H^1$ critical NLKG:
\begin{thm} \label{unif scat} 
Assume \pp{(1-28)} and $h_\I=0$. Let $\veci U_\I$ be the solution of 
\EQ{
 \veci V_\I:=e^{it|\na|}\psi, \pq \veci U_\I=\veci V_\I-i\int_{\ta_\I}^t e^{i(t-s)|\na|}f'(U_\I)ds.}
Let $\vecn U_{(n)}$ be the solution of \eqref{new prof} and $\vect u_{(n)}(t):=T_n\vecn U_{(n)}((t-t_n)/h_n)$. 
Suppose that $U_\I \in[W]^\bullet_2(J)$ for some interval $J$ whose closure in $[-\I,\I]$ contains $\ta_\I$. 
Then for any bounded subinterval $I\subset J$, we have, as $n\to\I$, 
\EQ{
 \pt\|\vecn U_{(n)}-\veci U_\I\|_{L^\I_{t\in I}L^2_x}
  +\|U_{(n)}-U_\I\|_{([W]_2^\bullet\cap[M]_0)(J)} 
  +\|u_{(n)}\|_{[W]_0(J)}\to 0, 
 \pr \|u_{(n)}\|_{([W]_2\cap[M]_0)(h_nJ+t_n)} \sim \|U_\I\|_{([W]_2^\bullet\cap[M]_0)(J)}+o(1).}
\end{thm}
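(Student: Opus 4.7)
The plan is to prove the theorem in two layers: (a) a linear convergence in the massless limit $\LR{\na}_n\to|\na|$, and (b) a nonlinear perturbation argument run over a finite partition of $J$ chosen from $U_\I$ alone. The global bound on $\|u_{(n)}\|_{([W]_2\cap[M]_0)(h_nJ+t_n)}$ will then follow from the scaling relation $\vect u_{(n)}=T_n\vecn U_{(n)}((t-t_n)/h_n)$ built into \eqref{new conc}.

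For the linear piece, the Fourier symbol
\[
 \sqrt{h_n^2+|\xi|^2}-|\xi|=\frac{h_n^2}{\sqrt{h_n^2+|\xi|^2}+|\xi|}
\]
is bounded by $h_n$ and tends pointwise to zero. A standard density argument on $\psi\in L^2$ gives $\|(e^{it\LR{\na}_n}-e^{it|\na|})\psi\|_{L^2_x}\to 0$ uniformly in $t$ on bounded intervals, and, combined with Strichartz estimates for $e^{it\LR{\na}_n}$ that are uniform in $h_n\in[0,1]$ (obtained from the wave Strichartz by a standard low/high frequency decomposition at scale $h_n$), yields $\vecn V_n\to\veci V_\I$ in $[W]_2^\bullet\cap[M]_0$ on bounded intervals. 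Rewriting Duhamel in \eqref{new prof} against the wave propagator produces a linear error $R_n$ of the form $(e^{i(t-s)\LR{\na}_n}-e^{i(t-s)|\na|})f'(U_{(n)})$ which, by the same density argument, tends to $0$ in the dual Strichartz norm as long as $f'(U_{(n)})$ stays uniformly bounded there.

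For the nonlinear piece, I would partition $J=J_1\cup\cdots\cup J_N$ into finitely many adjacent subintervals on which $\|U_\I\|_{[W]_2^\bullet(J_k)}\le\e$, with $\e$ chosen small enough depending only on the Strichartz constants. On each $J_k$, the difference $w_n:=U_{(n)}-U_\I$ satisfies an integral equation whose nonlinear part $f'(U_\I+w_n)-f'(U_\I)$ is controlled in $[W]_2^\bullet$ by $(\|U_\I\|+\|w_n\|)^{2^\star-2}\|w_n\|$; combined with the smallness on $J_k$, this closes a contraction for $w_n$ provided the initial-data difference at the left endpoint and the linear error $R_n$ on $J_k$ are small, both of which are supplied by Step~1 on $J_1$ and then propagated from $J_k$ to $J_{k+1}$ through the endpoint. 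Since $N$ depends only on $U_\I$, taking $n$ large enough uniformly over $k=1,\ldots,N$ gives $\|U_{(n)}-U_\I\|_{([W]_2^\bullet\cap[M]_0)(J)}\to 0$, from which the claimed $L^\I_{t\in I}L^2_x$ convergence and the limit \eqref{new 5-56} follow by combining with the linear convergence. The scaling relation transfers these bounds to the Strichartz spaces on $h_nJ+t_n$; the subcritical norm $[W]_0$ picks up a positive power of $h_n$, yielding $\|u_{(n)}\|_{[W]_0(J)}\to 0$.

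The hard part is exactly the gap in the original proof of \pp{Lemma 5.6}: to avoid a stability constant that grows with $|J|$, the partition $\{J_k\}$ must be fixed from $U_\I$ alone \emph{before} $n\to\I$, and the linear error $R_n$ on each $J_k$ must be measured in a Strichartz-type dual norm rather than pointwise in time. The latter requires $f'(U_{(n)})$ to be uniformly bounded in that dual norm on $J_k$, which follows from the contraction bound on $w_n$, the hypothesis $U_\I\in[W]_2^\bullet(J)$, and the critical Sobolev embedding for $f'$. An equally necessary technical point is verifying uniform (in $h_n\in[0,1]$) Strichartz constants for $e^{it\LR{\na}_n}$ at the level of the homogeneous norms encoded in $[W]_2^\bullet$; this is standard by scaling but must be checked to match the paper's conventions.
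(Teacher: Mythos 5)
Your outline works only where time stays bounded, and that is exactly where the real difficulty of Theorem \ref{unif scat} does \emph{not} lie. The interval $J$ is in general unbounded and $\ta_\I$ may be $\pm\I$, so at least one of your subintervals $J_k$ is unbounded (and, when $\ta_\I=\pm\I$, it is the one where the iteration must \emph{start}, since the data for $U_{(n)}$ is a scattering condition at $t=\ta_\I$ rather than a value at a finite endpoint). On such a $J_k$ both pillars of your argument fail: the strong convergence $\|(e^{it\LR{\na}_n}-e^{it|\na|})\psi\|_{L^2_x}\to 0$ is uniform only on bounded time intervals, because the symbol error is of size $|t|h_n^2/|\x|$ and does not vanish once $|t|h_n^2\gtrsim 1$ --- this is precisely the phenomenon behind error (2), where the limit is $\|(e^{im/(2|\na|)}-1)\psi\|_{L^2_x}$ or $\sqrt2\|\psi\|_{L^2_x}$; and, for the same reason, your claim that the linear error $R_n=(e^{i(t-s)\LR{\na}_n}-e^{i(t-s)|\na|})f'(U_{(n)})$ tends to zero in a dual Strichartz norm by density cannot hold on an unbounded $J_k$. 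So the contraction on the unbounded pieces has neither small forcing nor small (or even well-defined) endpoint data, and the propagation through endpoints never gets off the ground when $\ta_\I=\pm\I$.

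The paper's proof is built around this obstruction: it gives up strong convergence near temporal infinity altogether. On a bounded core $[S',S]$ it proves a quantitative stability lemma (Lemma \ref{lem:conv}) for the massive-versus-massless comparison, where the mass term is handled as $\|h_n^2U_n\|_{L^1_tL^2_x}\lec h_nS\|\cdot\|_{L^\I_tL^2_x}\to0$ and the partition constant $\de(\e,M)$ depends only on $U_\I$ (this is your fix for error (3), and that part of your plan is fine). On the unbounded tails it instead shows that $u_{(n)}$ and $U_\I$ are each small perturbations of their \emph{own} free evolutions (small-data scattering for NLKG and for NLW separately), and compares only the Strichartz norms of these free waves via Lemma \ref{lem:unif St}; that lemma is not an operator-convergence statement but rests on the wave-type dispersive decay of $e^{it\LR{\na}}$ for $|t|\gtrsim h_nS_n$, which supplies uniform-in-$n$ smallness of $\|v_n\|_{[Z]_2(h_nS_n,\I)}$ without any strong limit. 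Strong $L^2$ closeness of the data is then recovered only at the fixed finite time $S$, where both nonlinear solutions are close to their free waves and the free waves are close to each other, and from there the bounded-interval stability lemma takes over. Unless you add an ingredient of this kind (uniform Strichartz control of the concentrating free Klein--Gordon waves near $t=\pm\I$, plus the two small-data scattering steps), your argument does not prove the convergence of the $([W]_2^\bullet\cap[M]_0)(J)$ norms on unbounded $J$, nor does it handle $\ta_\I=\pm\I$ at all. A smaller but related point: the bound $\|u_{(n)}\|_{[W]_0}\to 0$ is not just ``scaling picks up a power of $h_n$,'' since $U_n$ is controlled only in homogeneous norms; the paper needs the low-frequency cutoff $P_{<1}v_n$ from Lemma \ref{lem:unif St} together with an $\dot H^{-1}$ estimate on $U_n-V_n$ to close this.
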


Postponing the proof of the above theorem to the next section, we continue to correct \pp{Section 5}. \eqref{new 5-56} in the case of $h_\I=0$ follows from the above estimate and $\ta_n\to\ta_\I$ via Strichartz: 
\EQ{
 \pt\|\int_{\ta_\I h_n+t_n}^0 e^{-is\LR{\na}}f'(u_{(n)})ds\|_{L^2_x}
 \pn\lec \|f'(u_{(n)})\|_{[W^{*(1)}]_2(I_n)} 
 \prq\lec \|u_{(n)}\|_{([W]_2\cap[M]_0)(I_n)}^{2^\star-1}
 \pn\lec \|U_\I\|_{[W]_2^\bullet\cap[M]_0(J_n)}^{2^\star-1}+o(1)=o(1),}
where $I_n:=(0,\ta_\I h_n+t_n)\cup(\ta_\I h_n+t_n,0)$ 
and $J_n:=(\ta_n,\ta_\I)\cup(\ta_\I,\ta_n)$. 

We modify the definition of $ST$ in \pp{(5-59)--(5-60)} in the $\dot H^1$ critical case \pp{(1-28)} to 
\EQ{
 \pt ST=[W]_2, \pq ST^*=[W^{*(1)}]_2+L^1_tL^2_x,
 \pq ST^\diamondsuit_\I:=\CAS{[W]_2 &(h^\diamondsuit_\I=1), \\ [W]_2^\bullet &(h^\diamondsuit_\I=0).}}
Indeed, $[K]_2$ and $[K^{*(1)}]_2$ norms are not needed in the $\dot H^1$ critical case. 
Then we simply discard the estimates \pp{(5-61)--(5-62)}. 

Next we reprove \pp{Lemma 5.5}, extending it to unbounded intervals $I$. 
The above theorem implies that we can replace \pp{(5-64)} with the stronger\footnote{Recall that $\ha U^j_\I$ in \cite{SB} is denoted by $U^j_\I$ in this errata according to \eqref{convention}.} 
\EQ{
 \limsup_{n\to\I}\|u^j_{(n)}\|_{ST(\R)} \lec \|U^j_\I\|_{ST^j_\I(\R)},}
if $h^j_\I=0$, while it is trivial if $h^j_\I=1$. 
The proof of \pp{(5-65)} for $h^j_\I=1$ did not use the boundedness of $I$, so we may assume that all $h^j_\I$ are $0$. Then the above theorem implies that $\|u^{<k}_{(n)}\|_{[W]_0(\R)}\to 0$ as $n\to\I$, so it suffices to estimate the homogeneous norm $[W]_2^\bullet(\R)$. We have 
\EQ{
 \|u^{<k}_{(n)}\|_{[W]_2^\bullet(\R)} \sim \sum_{l=1}^d\|\sum_{j<k}\check u^{j,l}_{n,m}\|_{L^p_t \ell^2_{m\in\Z}L^q_x}}
with $(1/p,1/q,s)=W$ and 
\EQ{
 \check u^{j,l}_{n,m}:=2^{sm}\de^l_m h^j_n T^j_n U^j_{(n)}((t-t^j_n)/h^j_n).}
Defining $\check u^{j,l}_{n,m,R}$ by \pp{(5-77)}, we have 
\EQ{
 \|\check u^{j,l}_{n,m}-\check u^{j,l}_{n,m,R}\|_{L^p_t\ell^2_mL^q_x}
 \lec \|2^{sm}\de^l_m U^j_{(n)}\|_{L^p_t\ell^2_mL^q_x(|t|+|m|+|x|>R)} \to 0,\pq(R\to\I)}
which is still uniform in $n$, since by the above theorem $U^j_{(n)}$ is approximated by $U^j_\I$ in $[W]_2^\bullet(\R)$, which is equivalent to the last norm without the restriction by $R$. 
Thus we obtain \pp{(5-65)} by the disjoint support property for large $n$. 

According to the change of $u^j_{(n)}$, we replace the nonlinear decomposition \pp{(5-66)} with a simpler form: 
\EQ{ \label{nonlin orth}
 \lim_{n\to\I}\|f'(u^{<k}_{(n)})-\sum_{j<k}f'(u^j_{(n)})\|_{ST^*(I)}=0,}
which is the same as \pp{(5-66)} if $h^j_\I=1$. 
In that case, however, we used that $I$ was bounded in \pp{(5-82)}. We replace it with an interpolation between \pp{(4-84)} and 
\EQ{ \label{fest in Z}
 \|f'_S(u)\|_{[((1-\te_0)K+\te_0 W)^{*(1)}]_2(I)} \lec \|u\|_{[K]_2(I)}\|u\|_{[K]_0(I)}^{p_1} \lec \|u\|_{[K]_2(I)}^{p_1+1},}
where we can choose some $\te_0\in(0,1)$ since $p_1>4/d$ (and choosing $p_1$ close enough to $4/d$ if necessary). 
Since $Z:=((1-\te_0)K+\te_0 W)^{*(1)}$ is an interior dual-admissible exponent, we can find some $\te_1\in(0,1)$ such that $\te_1Y+(1-\te_1)Z$ is also a dual-admissible exponent. Interpolating \eqref{fest in Z} with \pp{(4-84)}, we have 
\EQ{
 \|f'_S(u)-f'_S(v)\|_{[\te_1Y+(1-\te_1)Z]_2(I)}
 \lec \|(u,v)\|_{[K]_2(I)\cap[Q]_{2p_1}(I)}^{p_1+1-\te_1}\|u-v\|_{[P]_2(I)}^{\te_1}.}
Thus we obtain \pp{(5-66)} on any subset $I$ in the subcritical/exponential cases. 
In the $\dot H^1$ critical case \pp{(1-28)}, we discard $u_{\LR{n}}^j$ in \pp{(5-85)} and prove \eqref{nonlin orth} directly, putting 
\EQ{
 U^j_{n,R}(t,x)\pt:=\chi_R(t,x)U_{(n)}^j(t,x)
 \prQ\times \prod\{(1-\chi_{h_n^{j,l}R})(t-t_n^{j,l},x-x_n^{j,l})\mid 1\le l<k,\ h_n^lR<h_n^j\}.}
It is still uniformly bounded in $([H]_2^\bullet\cap[W]_2^\bullet)(\R)$, and $U^j_{n,R}-\chi_RU_{(n)}^j\to 0$ in $[M]_0(\R)$ as $n\to\I$, thanks to the above theorem, as well as in $[L]_0$, and also $\chi_RU_{(n)}^j\to U_{(n)}^j$ as $R\to\I$. 
Hence we may replace $u_{(n)}^j$ in \eqref{nonlin orth} by $u_{(n),R}^j:=h_n^jT_n^jU^j_{n,R}((t-t_n^j)/h_n^j)$, using \pp{(4-62)} for $d\le 5$, and a similar interpolation argument as above for $d\ge 6$, see \eqref{est in Y}--\eqref{est with O} below. 
Then we obtain \eqref{nonlin orth} by the disjoint support property, in the same way as \pp{(5-94)}.

With the above corrections, now we reprove \pp{Lemma 5.6}. 
First, \pp{(5-100)} holds for any subset $I\subset\R$, by the above improvement of \pp{Lemma 5.5}. 
Now, thanks to the change of $u^j_{(n)}$,  \pp{(5-101)} is simplified to 
\EQ{
 eq(u^{<k}_{(n)}) = f'(u^{<k}_{(n)})-\sum_{j<k}f'(u^j_{(n)}),}
which is vanishing by \eqref{nonlin orth}.  
Hence we obtain \pp{(5-103)}. 
We also obtain \pp{(5-104)} on $\R$ by the same nonlinear estimates as we used above. 
Then applying \pp{Lemma 4.5} on $\R$, we obtain the desired \pp{Lemma 5.6}. 

\pp{Section 6} is almost unchanged, except for the obvious modification in \pp{(6-6)} due to the change of $u_{(n)}$, namely
\EQ{
 \vect u^j_{(n)}=T_n^j \vecn U^j_{(n)}((t-t^j_n)/h^j_n),}
and the notational change in \U{(6-7)--(6-9)} from $(\vec U^0_\I,\ha U^0_\I)$ to $(\veci U^0_\I,U^0_\I)$ due to \eqref{convention}. 
Since the case $h_\I=0$ is eliminated in the proof of \pp{Lemma 6.1}, the errors (2)-(3) do not affect the rest of the paper.

\section{Massless limit of scattering for the critical NLKG}
It remains to prove Theorem \ref{unif scat}. Throughout this section, we assume  \pp{(1-28)}. 
The main idea is to decompose the time interval into a bounded subinterval and neighborhoods of $\pm\I$. 
On the bounded part, we have strong convergence in the massless limit. 
In the neighborhoods of $t=\pm\I$, we do not have strong convergence, but the Strichartz norms are uniformly controlled via the asymptotic free profiles. 

The first ingredient concerns the uniform Strichartz bound for free waves.
\begin{lem} \label{lem:unif St}
Let $\vect v_n=e^{it\LR{\na}}T_n\psi$, $h_\I=0$, $\veci V_\I=e^{it|\na|}\psi$, and let $Z\in[0,1/2]\times[0,1/2)\times[0,1)$ satisfy $\reg^0(Z)=1$ and $\str^0(Z)\le 0$, namely a wave-admissible Strichartz exponent except for the energy norm. Then we have 
\EQ{
 \limsup_{n\to\I}\|v_n\|_{[Z]_2(0,\I)}\lec \|V_\I\|_{[Z]_2^\bullet(0,\I)},
 \pq \lim_{n\to\I}\|P_{<1}v_n\|_{[Z]_2(0,\I)}=0, }
where $P_{<a}$ denotes the smooth cut-off for the Fourier region $|\x|<2a$ defined by $P_{<a}\fy = a^d \La_0(ax)*\fy$, 
with $\La_0\in\cS(\R^d)$ in the proof of \pp{Lemma 5.1}. 
If $Z_3=0$, then we have also $\|v_n\|_{[Z]_0(0,\I)}\to \|V_\I\|_{[Z]_0(0,\I)}$. 
\end{lem}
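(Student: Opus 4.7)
The plan is to transfer the statement to a question about the Klein--Gordon propagator with vanishing mass $h_n\to 0$, using the natural $\dot H^1$-critical scaling. A direct Fourier computation, substituting $\x=\eta/h_n$ in the symbol of $T_n$, yields the identity
\EQ{
 \vect v_n(t,x)=h_n^{(2-d)/2}\vecn V_n(t/h_n,x/h_n),
 \pq v_n(t,x)=h_n^{(4-d)/2}V_n(t/h_n,x/h_n),
}
so that $v_n$ is literally a rescaling of the profile $\vecn V_n=e^{it\LR{\na}_n}\psi$ from \eqref{new prof}. Under the hypotheses $\reg^0(Z)=1$ (regularity at $\dot H^1$-level) and $\str^0(Z)\le 0$ (wave (sub)admissibility), this rescaling is compatible with the homogeneous norm, giving the uniform-in-$n$ bound $\|v_n\|_{[Z]_2^\bullet(0,\I)}\lec \|V_n\|_{[Z]_2^\bullet(0,\I)}$.

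The main step is then to establish the massless-limit bound
\EQ{
 \limsup_{n\to\I}\|V_n\|_{[Z]_2^\bullet(0,\I)}\lec \|V_\I\|_{[Z]_2^\bullet(0,\I)}.
}
Pointwise in Fourier variables the symbol $\sqrt{h_n^2+|\x|^2}$ of $\LR{\na}_n$ converges to $|\x|$, so $e^{it\LR{\na}_n}\to e^{it|\na|}$ strongly on any fixed compact frequency set. I would first reduce to Schwartz $\psi$ with Fourier support in an annulus $|\x|\sim 1$ by density, using a uniform Strichartz bound $\|V_n\|_{[Z]_2^\bullet(\R)}\lec \|\psi\|_{\dot H^1}$ valid uniformly in $h_n\in(0,1]$ (the wave dispersive estimate applies uniformly at frequencies $|\x|\gec h_n$, while the contribution of $|\x|\lec h_n$ is controlled by $L^2$-boundedness). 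For data of Schwartz class with Fourier support bounded away from $0$, convergence $V_n\to V_\I$ in $[Z]_2^\bullet$ on compact time intervals follows by dominated convergence, and the tail near $t=\pm\I$ is handled by scattering for the wave equation plus the uniform Strichartz.

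For $\|P_{<1}v_n\|_{[Z]_2(0,\I)}\to 0$, the same scaling identity turns $P_{<1}v_n$ into $P_{<h_n}V_n$, whose initial data $P_{<h_n}\psi$ tends to zero in $H^1$ as $h_n\to 0$; the uniform Strichartz estimate then closes this bound, which combined with the previous step and the splitting $\|v_n\|_{[Z]_2}\lec \|P_{<1}v_n\|_{[Z]_2}+\|v_n\|_{[Z]_2^\bullet}$ gives the first assertion. For the third assertion ($Z_3=0$), the norm $[Z]_0$ carries no derivative weight, so the same scaling identity together with Fatou in one direction and the preceding $\limsup$ argument in the other upgrades $\lec$ to equality. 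The main obstacle is precisely the uniform Strichartz and the massless-limit convergence for $V_n$ on the full half-line, especially through the dispersive-transition region $|\x|\sim h_n$ where the Klein--Gordon dispersion degenerates; this is harmless because in the limit this region collapses to $\{0\}$ and carries no mass of $\psi\in\dot H^1$.
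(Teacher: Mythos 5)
Your overall architecture --- rescale $v_n$ to the small-mass profile $V_n$ via $\vect v_n(t)=T_n\vecn V_n(t/h_n)$, reduce by density and a mass-uniform Strichartz bound to smooth data with Fourier support in a fixed annulus, prove strong convergence $V_n\to V_\I$ on bounded time intervals, and handle the low-frequency piece through $P_{<1}T_n=T_nP_{<h_n}$ together with $\|P_{<h_n}\psi\|_{L^2}\to0$ --- is essentially the paper's proof (the paper merely works in the unrescaled variables, and its density reduction is in $L^2$ for the vector data $\psi$ rather than $\dot H^1$; also double-check your explicit scaling exponents, since $v_n=h_nT_nV_n(t/h_n)$). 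Your treatment of the $P_{<1}$ assertion and of the compact-time convergence is fine.

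The genuine gap is the tail $t\to\I$. What is needed there is smallness of $\|V_n\|_{[Z]_2^\bullet(S,\I)}$ (equivalently of $\|v_n\|_{[Z]_2(h_nS,\I)}$) \emph{uniformly in $n$} as $S\to\I$, and neither tool you invoke delivers it: ``scattering for the wave equation'' is vacuous because $V_n$ and $V_\I$ are already free solutions, and the smallness of the tail of $V_\I$ transfers to $V_n$ only through convergence on compact time intervals, so you would be arguing in a circle; the uniform Strichartz bound gives only $O(\|\psi\|_{L^2})$ on the tail, with no smallness. The paper closes this step with the wave-type dispersive decay estimate for the Klein--Gordon propagator applied to the rescaled data, $\|v_n(t)\|_{B^{\s}_{q,2}}\lec |t|^{-(d-1)\al}h_n^{1-\al-\s}\|\LR{\na}_n^{\s+s-1}\psi\|_{L^{q'}}$ with $\al=1/2-Z_2$, which after integration over $(h_nS_n,\I)$ and the cancellation coming from $\reg^0(Z)=1$ yields the tail bound $C(\psi)S_n^{\al-1+Z_3}\to0$, provided $S_n\to\I$ is chosen with $S_n^{1+Z_1}h_n^2\to0$ so that the perturbative comparison of $e^{it\LR{\na}_n}$ with $e^{it|\na|}$ still holds on $(0,S_n)$. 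The exponent $\al-1+Z_3$ is negative precisely because $\str^0(Z)\le0$ and the energy exponent $Z=(0,1/2,1)$ is excluded; this is the only place the hypotheses $Z_2<1/2$, $Z_3<1$, $\str^0(Z)\le0$ enter, and the fact that your sketch never uses them is the symptom of the missing estimate. The uniform dispersive decay at frequencies $\gec h_n$ that you mention in passing is indeed the right ingredient, but it must be applied quantitatively on the tail in this way (and the same tail control is also what justifies your claimed upgrade to norm convergence in the case $Z_3=0$, where the compact-time part contributes with constant exactly one by scale invariance and the tail must vanish).
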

\begin{proof}
Let $\vect v_n(t)=T_n\vecn V_n(t/h_n)$. 
The Strichartz estimate for the Klein-Gordon and the wave equations 
\EQ{
 \|v_n\|_{[Z]_2(0,\I)} \lec \|T_n\psi\|_{L^2} = \|\psi\|_{L^2},
 \pq \|V_\I\|_{[Z]_2^\bullet(0,\I)}\lec \|\psi\|_{L^2}}
implies that it suffices to consider $\psi$ in a dense subset of $L^2(\R^d)$. 
Hence we may assume that $\F\psi$ is $C^\I$ with a compact $\supp\F\psi\not\ni 0$. Since $0<\LR{\x}_n-\LR{\x}_\I\le h_n^2/|\x|$, 
\EQ{
 |(e^{it\LR{\x}_n}\LR{\x}_n^{-1}-e^{it|\x|}|\x|^{-1})| \lec |t|h_n^2|\x|^{-2}+h_n^2|\x|^{-3},}
and so, under the above assumption on $\psi$, for any $s\in\R$, and any sequence $S_n>0$, 
\EQ{ 
  \|V_n-V_\I\|_{L^\I(0,S_n;H^s)} \le \LR{S_n}h_n^2C(s,\psi).}
Hence by Sobolev in $x$ and H\"older in $t$, 
\EQ{
 \|V_n-V_\I\|_{([Z]_2^\bullet\cap[Z]_0)(0,S_n)} \le \LR{S_n}^{1+Z_1}h_n^2C(s,\psi).}
We deduce that if $S_n\to\I$ and $S_n^{1+Z_1}h_n^2 \to 0$, 
then using the (approximate) scale invariance of $[Z]_2^\bullet$, 
\EQ{
 \pt\|v_n\|_{[Z]_2(0,h_nS_n)} \sim \|v_n\|_{[Z]_2^\bullet(0,h_nS_n)} + \|P_{<1}v_n\|_{[Z]_0(0,h_nS_n)},
 \prq \|v_n\|_{[Z]_2^\bullet(0,h_nS_n)}\sim \|V_n\|_{[Z]_2^\bullet(0,S_n)} \to \|V_\I\|_{[Z]_2^\bullet(0,\I)},
 \prq \|P_{<1}v_n\|_{[Z]_0(0,h_nS_n)} \sim \|h_n^{Z_3}P_{<h_n}V_n\|_{[Z]_0(0,S_n)} \to 0,}
and similarly if $Z_3=0$, 
$\|v_n\|_{[Z]_0(0,h_nS_n)}=\|V_n\|_{[Z]_0(0,S_n)} \to \|V_\I\|_{[Z]_0(0,\I)}$. 

Next, the dispersive decay of wave-type for the Klein-Gordon equation 
\EQ{
 \|e^{it\LR{\na}}\fy\|_{B^0_{q,2}} \lec |t|^{-(d-1)\al}\|\fy\|_{B^{s}_{q',2}}
 \pq \al:=\frac{1}{2}-\frac{1}{q}\in[0,1/2],\pq s:=(d+1)\al,}
together with the embedding $L^{q'}\subset B^0_{q',2}$ implies that 
\EQ{
 \|v_n(t)\|_{B^{\s}_{q,2}} \pt\lec |t|^{-(d-1)\al}\|\LR{\na}^{\s+s-1}T_n\psi\|_{L^{q'}}
 \pr=|t|^{-(d-1)\al}h_n^{1-\al-\s}\|\LR{\na}_n^{\s+s-1}\psi\|_{L^{q'}},}
and so, putting $\al=1/2-Z_2$, 
\EQ{
 \|v_n\|_{[Z]_2(h_nS_n,\I)} \pt\le C(\psi)h_n^{1-\al-Z_3}\|t^{-(d-1)\al}\|_{L^{1/Z_1}_t(h_nS_n,\I)}
 \pr\sim C(\psi)h_n^{1-\al-Z_3}(h_nS_n)^{Z_1-(d-1)\al}
 = C(\psi) S_n^{\al-1+Z_3} \to 0}
where we used that $\reg^0(Z)=Z_3-Z_1+d\al=1$ in the last identity, and 
\EQ{
 \al-1+Z_3=\reg^0(Z)+\str^0(Z)-1-Z_1 < 0}
in taking the limit. 
Note that the above exponent is zero at the energy space $Z=(0,1/2,1)$, which is excluded by the assumption. 
The estimate in $[Z]_0(h_nS_n,\I)$ for $Z_3=0$ is done in the same way. 
Combining them with the above estimates on $(0,h_nS_n)$ leads to the conclusion via the density argument. 
\end{proof}
The second ingredient is convergence or propagation of small disturbance on finite intervals, which is uniformly controlled by the Strichartz norm of $U_\I$. 
\begin{lem} \label{lem:conv}
For any $0<M,\e<\I$, there exists $\de=\de(\e,M)\in(0,1)$ with the following property. 
Let $h_\I=0$ and let $U_\I$ be a solution of NLW on some interval $J$ satisfying 
$\|U_\I\|_{([H]_2^\bullet\cap[W]_2^\bullet)(J)}\le M$. 
Then for any bounded subinterval $I\subset J$ with $0\in I$ and any $\fy_n\in L^2(\R^d)$ with $\|\fy_n\|_{L^2}<\de$, the unique solution $U_n$ of 
\EQ{
 (\p_t^2-\De+h_n^2)U_n=f'(U_n), \pq \vecn U_n(0)=\veci U_\I(0)+\fy_n}
exists on $I$ for large $n$, satisfying
\EQ{ \label{local conv}
 \pt \|\vecn U_n-\veci U_\I\|_{L^\I_t L^2_x(I)} + \|U_n - U_\I\|_{([W]_2^\bullet\cap[M]_0)(I)}<\e,} 
and $\|h_nT_nU_n((t-t_n)/h_n)\|_{[W]_0(h_nI+t_n)}\lec\de$ for large $n$. 
\end{lem}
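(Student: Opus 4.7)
The plan is to view $U_\I$ as an approximate solution of the mass-$h_n$ NLKG (with equation error $h_n^2 U_\I$) and to iterate the long-time perturbation lemma \pp{Lemma 4.5}. Writing $\vecn U_\I := (\LR{\na}_n-i\p_t)U_\I$, the NLW identity $(\p_t^2-\De)U_\I=f'(U_\I)$ recasts as the NLKG-Duhamel identity
\EQN{
\vecn U_\I(t)=e^{it\LR{\na}_n}\vecn U_\I(0)-i\int_0^t e^{i(t-s)\LR{\na}_n}\bigl(f'(U_\I)+h_n^2U_\I\bigr)ds,
}
so that, setting $W_n:=U_n-U_\I$, the difference $\vecn U_n-\vecn U_\I$ satisfies NLKG-Duhamel with forcing $f'(U_\I+W_n)-f'(U_\I)-h_n^2 U_\I$ and initial datum $\vecn U_n(0)-\vecn U_\I(0)=\fy_n-(\LR{\na}_n-|\na|)U_\I(0)$. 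The auxiliary comparison $\vecn U_\I-\veci U_\I=(\LR{\na}_n-|\na|)U_\I$ is then a purely linear low-frequency correction between the two ``vector'' conventions.

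First I would partition $J$ into $K=K(M,\eta_0)$ subintervals $J_1,\dots,J_K$ with $\|U_\I\|_{[W]_2^\bullet(J_k)}\le\eta_0$, with $\eta_0$ small enough for \pp{Lemma 4.5} to close (absorbing the linearised cross-term $\|U_\I\|^{2^\star-2}\|W_n\|$ in $f'(U_\I+W_n)-f'(U_\I)$). Intersecting with $I$ and iterating inductively over $I\cap J_k$, the bootstrap closes provided $\de$ is small enough to handle the self-interaction $\|W_n\|^{2^\star-1}$ together with the propagated error from the previous subinterval, and provided the linear mass-shift errors
\EQN{
\|(\LR{\na}_n-|\na|)U_\I(0)\|_{L^2_x},\qquad \|h_n^2 U_\I\|_{ST^*(I)}
}
tend to zero as $n\to\I$. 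Both do on a fixed bounded $I$: by a density argument on $\veci U_\I(0)$ and a time-cutoff of $f'(U_\I)$ one may replace these profiles by Schwartz approximations, after which the pointwise bound $h_n^2/(|\x|+\LR{\x}_n)\to 0$ at each $\x\ne 0$ together with dominated convergence delivers the desired limits.

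Combining these ingredients, \pp{Lemma 4.5} applied successively on each $I\cap J_k$ yields $\|\vecn U_n-\vecn U_\I\|_{L^\I_tL^2_x(I)}+\|W_n\|_{([W]_2^\bullet\cap[M]_0)(I)}<\e/2$ for $\de$ small and $n$ large, and adding the mass-shift comparison $\|\vecn U_\I-\veci U_\I\|_{L^\I_tL^2_x(I)}\to 0$ (handled identically) produces \eqref{local conv}. The last bound $\|h_nT_nU_n((t-t_n)/h_n)\|_{[W]_0(h_nI+t_n)}\lec\de$ then follows by rescaling: the inhomogeneous wave-Strichartz norm at the rescaled concentration scale picks up a positive power of $h_n$ from low frequencies, so by Lemma \ref{lem:unif St} applied to the free profile plus the already-established $([W]_2^\bullet\cap[M]_0)$-smallness of $W_n$ one obtains the claimed $\de$-bound. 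The principal obstacle I anticipate is the uniform-in-$t$ control of the mass-shift errors: at frequencies $|\x|\lesssim h_n$ the ratio $(\LR{\x}_n-|\x|)/|\x|$ is $O(1)$, so vanishing is not pointwise uniform in $\x$, and one must first reduce to smooth profiles by density before invoking dominated convergence to close the $L^2$-estimate uniformly in time over the bounded interval $I$.
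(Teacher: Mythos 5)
There is a genuine gap, and it sits exactly at the point your own last sentence gestures at but then dismisses. Your strategy treats $U_\I$ as an approximate solution of the mass-$h_n$ NLKG, which forces you to control the mass-shift quantities $(\LR{\na}_n-|\na|)U_\I(0)$ (and more generally $\vecn U_\I-\veci U_\I$) in $L^2_x$, and $h_n^2U_\I$ in a dual Strichartz norm such as $L^1_tL^2_x$. But the hypothesis only gives $U_\I$ in the homogeneous energy space: $\veci U_\I\in L^\I_tL^2_x$, i.e.\ $U_\I(t)\in\dot H^1$, \emph{not} $U_\I(t)\in L^2_x$. The relevant Fourier multipliers acting on $\veci U_\I$ are $(\LR{\x}_n-|\x|)/|\x|=h_n^2/\bigl(|\x|(\LR{\x}_n+|\x|)\bigr)$ and $h_n^2/|\x|$, which behave like $h_n/|\x|$ for $|\x|\lec h_n$ — they are \emph{unbounded} on $L^2$ for every fixed $n$, not $O(1)$ as you state. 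Consequently your density/dominated-convergence argument cannot close: there is no uniform-in-$n$ operator bound with which to transfer smallness from a Schwartz approximation back to general data, and in fact for admissible data (e.g.\ radial $\ha{\veci U}_\I(0)$ with dyadic $L^2$ masses $2^{-k}$ near $\x=0$) the quantities $\|(\LR{\na}_n-|\na|)U_\I(0)\|_{L^2}$ and $\|h_n^2U_\I(0)\|_{L^2}$ are infinite, so the "error terms" you feed into \pp{Lemma 4.5} need not even be finite. This low-frequency obstruction is precisely why the errata's proof does \emph{not} compare $\vecn U_n$ with $\vecn U_\I$.

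The paper's route avoids this as follows: it keeps the \emph{wave} equation for the difference $\ga_n=U_n-U_\I$, so the mass error appears as $h_n^2U_n$ on the NLKG solution, where it is controlled without any low-frequency loss by $h_n^2\|U_n\|_{L^2_x}\le h_n\|\LR{\na}_nU_n\|_{L^2_x}\le h_n\|\vecn U_n\|_{L^2_x}$, costing only one factor $h_n$ against the KG energy of $U_n$ itself; and it measures the data/solution difference in the mixed vector $\vecd\ga_n=\vecn U_n-\veci U_\I$, whose uniform $L^\I_tL^2_x$ bound is then established by a separate low-frequency argument (split at $|\x|=R_n$ with $h_n\ll R_n\to0$, compare with the free solutions $V_n,V_\I$, and use energy/H\"older estimates in $\dot H^{-1}_x$ for the Duhamel terms). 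This bootstrap in the wave framework, plus — for $d\ge6$ — exotic Strichartz norms and an interpolation with the dual exponent $O$ (the plain Lipschitz estimate for $f'$ in standard dual Strichartz spaces fails when $2^\star-2<1$, so invoking a generic long-time perturbation lemma also glosses over this), is the missing core of your argument. Your final step for the $[W]_0$ bound is in the right spirit (it matches \eqref{un in W0}), but it too relies on the low-frequency comparison $\|P_{<h_n}(U_n-V_n)\|$ via the $\dot H^{-1}$ estimate, which your outline does not supply.
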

\begin{proof}
We give the detail only in the harder case $d\ge 6$, where we need the exotic Strichartz norms. Let $\ga_n:=U_n- U_\I$ and $\vecd\ga_n:=\vecn U_n-\veci U_\I$, then 
\EQ{ \label{eq gan}
 (\p_t^2-\De)\ga_n=f'(U_\I+\ga_n)-f'(U_\I)-h_n^2U_n.}
Remark however that $\vecd\ga_n$ is not written only by $\ga_n$. 
It suffices to prove the following 

\medskip

{\bf Claim.} There exist constants $\te\in(0,1)$ and $C>1$ such that if 
\EQ{ \label{small ST}
 \|U_\I\|_{([W]_2^\bullet\cap[\ti M]_{2p}^\bullet)(0,S)} \le \y, \pq \|\vecd\ga_n(0)\|_{L^2} \ll 1}
for some $0<S<\I$ and $0<\y\ll 1$, where $p=2^\star-2=4/(d-2)$, then 
\EQ{ \label{claim}
 \pt\|\vecd\ga_n\|_{L^\I_t(0,S;L^2_x)} + \|\ga_n\|_{[W]_2^\bullet(0,S)} 
 \pn\le C[\|\vecd\ga_n(0)\|_{L^2} + \|\vecd\ga_n(0)\|_{L^2}^{\te}\y^{(p+1)(1-\te)}].}
\begin{proof}[Proof of the claim] 
The exotic Strichartz estimate for the wave equation yields on the time interval $(0,S)$ 
\EQ{
 \pt\|\ga_n\|_{[\ti N]_2^\bullet} \lec \|\veci\ga_n(0)\|_{L^2}+\|f'(U_\I+\ga_n)-f'(U_\I)\|_{[Y]_2} + \|h_n^2U_n\|_{L^1_tL^2_x},}
while the nonlinear estimate in the Besov space yields
\EQ{ \label{est in Y}
 \pt\|f'(U_\I+\ga_n)-f'(U_\I)\|_{[Y]_2} 
 \pr\lec \|(U_\I,\ga_n)\|_{[M]_0}^p\|\ga_n\|_{[\ti N]_2^\bullet}+\|(U_\I,\ga_n)\|_{[\ti M]_{2p}^\bullet}^p\|\ga_n\|_{[N]_0},}
and we have $\|\veci\ga_n(0)\|_{L^2} \lec \|\vecd\ga_n(0)\|_{L^2}+o(1)$. The $L^1_tL^2_x$ norm is estimated by 
\EQ{
 \|h_n^2U_n\|_{L^1_tL^2_x} \le \|h_n\vecn U_n\|_{L^1_tL^2_x} \le 
h_nS\|\vecd\ga_n+\veci U_\I\|_{L^\I_tL^2_x}.} 

Define $\U{W},O\in[0,1/2]^3$ by 
\EQ{
 \pt\U{W}:=W-\frac{1}{2}(0,1/d,1)=(\frac{d-1}{2(d+1)},\frac{d^2-2d-1}{2d(d+1)},0), 
 \pr O:=W+p\U{W}=(\frac{(d+2)(d-1)}{2(d+1)(d-2)},\frac{d^3+d^2-6d-4}{2(d-2)d(d+1)},1/2).}
Then $O$ is an interior dual exponent of the standard Strichartz, and so, there is small $\te\in(0,1)$ such that $\te Y+(1-\te)O$ is also a dual exponent. 
Hence the standard Strichartz yields for any wave-admissible exponent $Z$, 
\EQ{ \label{est with O}
 \pt \|\ga_n\|_{[Z]_2^\bullet}+\|\veci\ga_n\|_{L^\I_tL^2_x} 
 \prQ\lec \|\veci\ga_n(0)\|_{L^2} + \|f'(U_\I+\ga_n)-f'(U_\I)\|_{[\te Y+(1-\te)O]^\bullet_2}+\|h_n^2U_n\|_{L^1_tL^2_x},}
where the nonlinear part is already estimated in $[Y]^\bullet_2$, while  
\EQ{
 \|f'(U_\I+\ga_n)\|_{[O]^\bullet_2}+\|f'(U_\I)\|_{[O]^\bullet_2} \lec \y^{p+1} + \|\ga_n\|_{[W]^\bullet_2}^{p+1}.}
Hence we have 
\EQ{
 \pt\|\ga_n\|_{[\ti N]^\bullet_2} \lec \|\veci\ga_n(0)\|_{L^2} + A + B,
 \pr\|\ga_n\|_{[W]_2^\bullet\cap[\ti M]_{2p}^\bullet}+\|\veci\ga_n\|_{L^\I_tL^2_x}  
 \pn\lec \|\veci\ga_n(0)\|_{L^2}+A^\te(\y + \|\ga_n\|_{[W]^\bullet_2})^{(1-\te)(p+1)}+B,
 \pr A \lec (\y+\|\ga_n\|_{[\ti M]^\bullet_{2p}})^p\|\ga_n\|_{[\ti N]^\bullet_2},
 \pq B \lec Sh_n\|\vecd\ga_n\|_{L^\I_tL^2_x}+o(1).}
Assuming that $\|\ga_n\|_{[\ti M]_{2p}^\bullet}\ll 1$ and that $\|\vecd\ga_n\|_{L^\I_tL^2_x}$ is bounded in $n$, we deduce from the above estimates that 
\EQ{ \label{boot}
 \pt A \ll \|\ga_n\|_{[\ti N]^\bullet_2} \lec \|\veci\ga_n(0)\|_{L^2}+o(1), \pq B=o(1),
 \pr \|\ga_n\|_{[W]_2^\bullet\cap[\ti M]_{2p}^\bullet}+\|\veci\ga_n\|_{L^\I_tL^2_x} \lec \|\veci\ga_n(0)\|_{L^2} + \|\veci\ga_n(0)\|_{L^2}^\te \y^{(1-\te)(p+1)}+o(1).}

It remains to prove the uniform bound on $\|\vecd\ga_n\|_{L^\I_tL^2_x}$. 
Let $V_\I,V_n,v_n$ be the free solutions defined by 
\EQ{
 \veci V_\I:=e^{it|\na|}\veci U_\I(0), \pq 
 \vecn V_n:=e^{it\LR{\na}_n}\vecn U_n(0),
 \pq \vec v_n=T_n\vecn V_n(t/h_n). }
For any $0<R_n\to 0$ such that $h_n/R_n\to 0$, we have 
\EQ{ \label{tiga hf}
 \|\F\vecd\ga_n\|_{L^\I(0,S;L^2(|\x|>R_n))} \lec \|\veci\ga_n\|_{L^\I(0,S;L^2_x)}+o(1).}
For the lower frequency, we have by the energy inequality, H\"older and Sobolev, 
\EQ{ \label{low freq diff}
 \|\vecn U_n-\vecn V_n\|_{L^\I_t \dot H^{-1}_x(0,S)} \pt\lec \|f'(U_n)\|_{L^1_t\dot H^{-1}_x(0,S)}
 \pn\lec S\|U_n\|_{L^\I_t \dot H^1_x(0,S)}^{p+1} 
 \pr\lec S(\|\veci U_\I\|_{L^\I_t L^2_x(0,S)}+\|\veci\ga_n\|_{L^\I_tL^2_x(0,S)})^{p+1},}
and similarly, $\|\veci U_\I-\veci V_\I\|_{L^\I_t \dot H^{-1}_x(0,S)} \lec S\|\veci U_\I\|_{L^\I_tL^2_x}^{p+1}$. 
Since $|\LR{\x}_n-\LR{\x}_\I| \le h_n$, we have also 
$\|\vecn V_n(t)-\veci V_\I(t)\|_{L^2_x}\lec |t|h_n\|\veci U_\I(0)\|_{L^2}+\de$. 
Hence 
\EQ{
 \|\F\vecd\ga_n\|_{L^\I(0,S;L^2(|\x|<R_n))}
 \pt\le R_n\|\vecn U_n-\vecn V_n\|_{L^\I_t \dot H^{-1}_x(0,S)}
   +\|\vecn V_n-\veci V_\I\|_{L^\I_tL^2_x(0,S)}  
  \prq+R_n\|\veci V_\I-\veci U_\I\|_{L^\I_t \dot H^{-1}_x(0,S)} 
 \pr\lec o(1)S\|\veci\ga_n\|_{L^\I_tL^2_x(0,S)}^{p+1}+\de+o(1)}
Adding it to \eqref{tiga hf}, we obtain 
\EQ{
 \|\vecd\ga_n\|_{L^\I_tL^2_x(0,S)}
 \lec \|\veci\ga_n\|_{L^\I_tL^2_x(0,S)} + o(1)S\|\veci\ga_n\|_{L^\I_tL^2_x(0,S)}^{p+1} + \de + o(1).}
Combining it with the above estimates \eqref{boot}, we deduce that both $\vecd\ga_n$ and $\veci \ga_n$ are bounded in $L^\I_tL^2_x(0,S)$. 
\end{proof}

To prove \eqref{local conv} from the above claim, we decompose $I$ into subintervals $I_j$, such that $\|U_\I\|_{([W]_2^\bullet\cap[\ti M]_{2p}^\bullet)(I_j)}\le \y$ for each $j$. 
Then applying the above claim iteratively to the subintervals for small $\de>0$ yields \eqref{local conv}, where the bound on $[M]_0$ is derived by interpolation and Sobolev embedding of $[H]_2^\bullet$ and $[W]_2^\bullet$. 

For the estimate in $[W]_0$, we have by scaling 
\EQ{ \label{un in W0}
 \pt\|h_nT_nU_n((t-t_n)/h_n)\|_{[W]_0(h_nI+t_n)}
   \sim h_n^{1/2}\|U_n\|_{[W]_0(I)}
 \prq \lec h_n^{1/2}\|U_n\|_{[W]_2^\bullet(I)} + \|P_{<1}v_n\|_{[W]_0(I)} + h_n^{1/2} \|P_{<h_n}(U_n-V_n)\|_{[W]_0(I)},}
where $\vecn V_n:=e^{it\LR{\na}_n}\vecn U_n(0)$ and $\vec v_n=T_n\vecn V_n(t/h_n)$. 
The first term on the right is vanishing since $\|U_n\|_{[W]_2^\bullet(I)}$ is bounded as shown above. 
The second term is $O(\de)$ by Lemma \ref{lem:unif St}. 
The third term is bounded, using Sobolev, H\"older and the same estimate as in \eqref{low freq diff}, by
\EQ{
 \pt |I|^{W_1}h_n^{1/2+d(1/2-W_2)}\|U_n-V_n\|_{L^\I_tL^2_x(I)}
 \prq\lec (|I|h_n)^{3/2-1/(d+1)}(\|\veci U_\I\|_{L^\I_tL^2_x(I)}+\e)^{p+1}=o(1),}
hence \eqref{un in W0} is $O(\de)$ for large $n$. 
This concludes the proof of the lemma for $d\ge 6$. 

The case $d\le 5$ is the same, but the nonlinear estimate is much simpler. 
In \eqref{small ST}, $[\ti M]_{2p}^\bullet$ is replaced with $[M]_0$, and by the standard Strichartz, we have 
\EQ{
 \pt\|\ga_n\|_{[W]_2^\bullet\cap[M]_0} + \|\veci \ga_n\|_{L^\I_tL^2_x}
 \prq\lec \|\veci \ga_n(0)\|_{L^2} + \|f'(U_\I+\ga_n)-f'(U_\I)\|_{[W^{*(1)}]_2^\bullet} + \|h_n^2U_n\|_{L^1_tL^2_x},}
and  
\EQ{ 
 \|f'(U_\I+\ga_n)-f'(U_\I)\|_{[W^{*(1)}]_2^\bullet} \pt\lec \|(U_\I,\ga_n)\|_{[W]_2^\bullet\cap[M]_0}^p\|\ga_n\|_{[W]_2^\bullet\cap[M]_0}
 \pr\lec (\y+\|\ga_n\|_{[W]_2^\bullet\cap[M]_0})^p\|\ga_n\|_{[W]_2^\bullet\cap[M]_0}.}
Then estimating $\|h_n^2U_n\|_{L^1_tL^2_x(0,S)}$ in the same way as for $d\ge 6$, we obtain \eqref{claim} without the last term. \eqref{un in W0} is the same as above. 
\end{proof}

\begin{proof}[Proof of Theorem \ref{unif scat}]
Let $v_n,V_n,V_\I$ be the free solutions defined by
\EQ{
 \vecn V_n=e^{it\LR{\na}_n}\psi, \pq \veci V_\I=e^{it|\na|}\psi, \pq \vec v_n=T_nV_n((t-t_n)/h_n),}
and 
\EQ{
 M:=\|U_\I\|_{[W]_2^\bullet(J)}.} 

First consider the case $\ta_\I=\I$. 
Let $0<\e<1$ and choose $S>0$ so large that 
\EQ{
 \de_0:=\|V_\I\|_{([W]_2^\bullet\cap[M]_0)(S,\I)} \le \de(\e,M),}
where $\de(\cdot,\cdot)$ is given by Lemma \ref{lem:conv}. 
Then Lemma \ref{lem:unif St} implies that 
\EQ{
 \|v_n\|_{([W]_2\cap[M]_0)(h_nS+t_n,\I)} \lec \de_0}
for large $n$. If $\de_0\ll 1$, then the standard scattering argument for NLKG using the Strichartz norms implies that $u_{(n)}$ exists on $(h_nS+t_n,\I)$, satisfying
\EQ{
 \|\vect u_{(n)}-\vect v_n\|_{L^\I_t L^2_x(h_nS+t_n,\I)}+ \|u_{(n)}-v_n\|_{([W]_2\cap[M]_0)(h_nS+t_n,\I)} \lec \de_0^{2^\star-1} \ll \de_0,} 
and also for NLW 
\EQ{
 \|\veci U_\I-\veci V_\I\|_{L^\I_t L^2_x(S,\I)}+ \|U_\I-V_\I\|_{([W]_2^\bullet\cap[M]_0)(S,\I)} \lec \de_0^{2^\star-1} \ll \de_0.}
Thus we obtain 
\EQ{
 \|u_{(n)}\|_{([W]_2\cap[M]_0)(h_nS+t_n,\I)} 
 \pn\lec \|V_\I\|_{([W]_2^\bullet\cap[M]_0)(S,\I)}\sim \|U_\I\|_{([W]_2^\bullet\cap[M]_0)(S,\I)},}
and, for large $n$, 
\EQ{
 \|\vecn U_{(n)}(S)-\vecn V_n(S)\|_{L^2_x} + \|\vecn V_n(S)-\veci V_\I(S)\|_{L^2_x}+ \|\veci V_\I(S)-\veci U_\I(S)\|_{L^2_x} \ll \de_0. } 

The next step is to go from $S$ to the negative time direction. 
If $J$ is bounded from below, then let $S':=\inf J$. 
Otherwise, choose $S'<S$ so that 
\EQ{
 \|U_{\I}\|_{([W]_2^\bullet\cap[M]_0)(-\I,S')}< \e.} 
Applying Lemma \ref{lem:conv} to $U_\I$ and $U_{(n)}$ backward in time from $t=S$, we obtain 
\EQ{
 \pt \|\vecn U_{(n)}-\veci U_{\I}\|_{L^\I_tL^2_x(S',S)}+\|U_{(n)}-U_{\I}\|_{([W]_2^\bullet\cap[M]_0)(S',S)}<\e,} 
and $\|u_{(n)}\|_{[W]_0(h_nS'+t_n,h_nS+t_n)}\lec\de_0$ for large $n$. 

If $J$ is unbounded from below, we have still to go from $S'$ to $-\I$. 
The standard argument for small data scattering of NLW for $t\to-\I$ implies that 
\EQ{
 \pt\|\re|\na|^{-1}e^{it|\na|}\veci U_{\I}(S')\|_{([W]_2^\bullet\cap[M]_0)(-\I,0)}  \pn\sim \|U_{\I}\|_{([W]_2^\bullet\cap[M]_0)(-\I,S')} < \e.}
Then Lemma \ref{lem:unif St} applied backward in $t$ implies for large $n$ 
\EQ{
 \|\re\LR{\na}^{-1}e^{it\LR{\na}}T_n\veci U_{\I}(S')\|_{([W]_2\cap[M]_0)(-\I,0)} \lec \e.}
Let $w_n$ be the solution of NLKG with $\vect w_n(0)=T_n\vecn U_{(n)}(S')$. 
Then the above estimate together with $\|\vecn U_{(n)}(S')-\veci U_{\I}(S')\|_{L^2_x}<\e$ and the scattering for NLKG implies 
\EQ{
 \|w_n\|_{([W]_2\cap[M]_0)(-\I,0)} \lec \e.}
Since $w_n=h_nT_nU_{(n)}(t/h_n+S')=u_{(n)}(t+h_nS'+t_n)$, we deduce that 
\EQ{
 \pt\|U_{(n)}\|_{([W]_2^\bullet\cap[M]_0)(-\I,S')} \sim \|u_{(n)}\|_{([W]_2^\bullet\cap[M]_0)(-\I,h_nS'+t_n)}
 \prq\lec \|u_{(n)}\|_{([W]_2\cap[M]_0)(-\I,h_nS'+t_n)} = \|w_n\|_{([W]_2\cap[M]_0)(-\I,0)}  \lec \e.} 

Thus we obtain, in the case $\ta_\I=\I$, 
\EQ{
 \|U_{(n)}-U_\I\|_{([W]_2^\bullet\cap[M]_0)(J)}+\|u_n\|_{[W]_0(h_nJ+t_n)} \lec \e+\de_0}
for large $n$. Since $\e$ and $\de_0$ can be chosen as small as we wish, it implies 
\EQ{ \label{global St conv}
 \lim_{n\to\I} \|U_{(n)}-U_\I\|_{([W]_2^\bullet\cap[M]_0)(J)}+\|u_n\|_{[W]_0(h_nJ+t_n)}=0,}
and by scaling, 
\EQ{
 \|u_{(n)}\|_{([W]_2\cap [M]_0)(h_n J+t_n)} \pt\sim \|U_\I\|_{([W]_2^\bullet\cap[M]_0)(J)}+\|u_{(n)}\|_{[W]_0(h_n J+t_n)}
 \pr=\|U_\I\|_{([W]_2^\bullet\cap[M]_0)(J)}+o(1). }
 
Since $S\to\I$ and $S'\to\inf J$ as $\e,\de\to+0$, we also obtain 
\EQ{ \label{local H conv}
 \lim_{n\to\I}\|\vecn U_{(n)}-\veci U_\I\|_{L^\I_tL^2_x(I)}=0,}
for any finite subinterval $I$. 
The case $\ta_\I=-\I$ is the same by the time symmetry. 

If $\ta_\I\in\R$, then $\|\vecn U_{(n)}(\ta_\I)-\veci U_\I(\ta_\I)\|_{L^2_x}\to 0$. Hence the same argument as we used above to go from $S$ to $-\I$ yields 
\EQ{
 0 \pt=\lim_{n\to\I}\|\vecn U_{(n)}-\veci U_\I\|_{L^\I_tL^2_x(S',\ta_\I)}
  \pn=\lim_{n\to\I}\|U_{(n)}- U_\I\|_{([W]_2^\bullet\cap[M]_0)(\inf J,\ta_\I)},}
for any $S'\in(\inf J,\ta_\I)$, and also on $(\ta_\I,\sup J)$ by the time symmetry. 
Thus we obtain \eqref{global St conv} and \eqref{local H conv} for any $\ta_\I\in[-\I,\I]$. 
\end{proof}

\section*{Acknowledgments}
The authors are grateful to Takahisa Inui, Tristan Roy, and Federica Sani for pointing out the errors.

\end{document}